\documentclass[12pt]{amsart}
\usepackage{amsmath,amsfonts,amssymb,amsthm,amstext,pgf,graphicx,verbatim,lmodern,textcomp,color,young,tikz}
\usetikzlibrary{decorations}
\usepackage[mathscr]{euscript}
\usetikzlibrary{decorations.markings}
\usetikzlibrary{arrows}
\usepackage{float}
\usepackage{enumerate}
\usetikzlibrary{matrix, arrows.meta}
\usepackage{hyperref}
\theoremstyle{plain}
\newtheorem{theorem}{Theorem}[section]
\newtheorem{lemma}[theorem]{Lemma}

\theoremstyle{remark}

\theoremstyle{definition}

\newtheorem{example}{Example}[section]

\title{}
\begin{document}
	\title [Unimodular matrices and lattice paths enumeration via Pascal's triangle]{Unimodular matrices and lattice paths enumeration via Pascal's triangle}
	\author[S. Bera]{S. Bera}
	\address[Sudip Bera]{Faculty of Mathematics, Dhirubhai Ambani University, Gandhinagar, India}
	\email{sudipbera517@gmail.com, sudip\_bera@dau.ac.in}
	
	
	\begin{abstract}
		This article investigates a remarkable combinatorial identity involving a distinguished family of matrices whose entries are defined via binomial coefficients. Specifically, we consider a class of \( n \times n \) matrices parameterized by a positive integer \( m \), where each entry reflects a structured pattern derived from Pascal's triangle, particularly the diagonals corresponding to figurate numbers such as triangular, tetrahedral, and higher-dimensional simplex numbers. We establish, by means of a bijective argument, that the determinant of any such matrix is identically equal to \( 1 \), independent of the specific values of \( m \) and \( n \), provided that \( 2 \leq m \leq n \). This result unveils a profound connection between classical binomial identities and the enumeration of lattice paths in grid graphs.

		\medskip
		
		\textit{Keywords:}		
		Unimodular matrix; $k$-simplex number; Lattice path enumeration.
		
		\medskip  
		
		\textit{MSC2020:}  05C30, 05C50 
		
	\end{abstract}
	
	\maketitle

	\section{Introduction}
	A \emph{binomial determinant} usually refers to the determinant of a matrix where the entries are binomial coefficients \({n \choose k}\), or polynomials involving them. One of the most elegant representations of binomial coefficients is found in Pascal's triangle, where each entry $\binom{n}{k}$ corresponds to the number of paths in a grid or the number of ways to choose $k$ items from a set of $n$ elements.
	Pascal's triangle not only encodes combinations but also contains within its diagonals a series of well-known figurate numbers, such as the natural numbers, triangular numbers, tetrahedral numbers, and their higher-dimensional analogs. These diagonals can be interpreted geometrically in terms of the number of $k$-dimensional simplices that can be formed from a set of points. 
	
	Binomial determinants arise in the enumeration of
	\emph{plane partitions} fitting in an $a \times b \times c$ box, rhombus
	tilings of hexagonal regions, and lozenge tilings
	\cite{gessel1985,krattenthaler1999,krattenthaler2005,macmahon1897}.
	The classical MacMahon formula states that the number of plane partitions fitting
	in an $a \times b \times c$ box is
	\[
	\prod_{i=1}^{a}\prod_{j=1}^{b}\prod_{k=1}^{c}
	\frac{i+j+k-1}{i+j+k-2}.
	\]
	More generally, the number of plane partitions with a prescribed symmetry group
	can be expressed as a determinant whose entries are binomial coefficients
	\cite{krattenthaler1999,stembridge1990}.
	
	A matrix with integer entries and determinant equal to $\pm 1$ is called
	\emph{unimodular}, and is \emph{totally unimodular} (TU) if every square submatrix has
	determinant $0$, $+1$, or $-1.$ Totally unimodular matrices play a fundamental role in polyhedral combinatorics and combinatorial optimization, as they provide a convenient criterion for determining whether a linear program is integral \cite{HellerTompkins1956,HoffmanKruskal2010,HoffmanKruskal1956,Zaslavsky1982}. More precisely, if \(A\) is a totally unimodular matrix and \(b\) is an integral vector, then the linear programs $\min \left\{ c^{\top}x \;\middle|\; Ax \geq b,\; x \geq 0 \right\}$ and
	$\max \left\{ c^{\top}x \;\middle|\; Ax \leq b \right\}$
	have integral optimal solutions whenever an optimum exists, for every choice of the cost vector \(c\).
	Consequently, if \(A\) is totally unimodular and \(b\) is integral, then every extreme point of the feasible region
	$\left\{ x \;\middle|\; Ax \geq b \right\}$
	is integral. Therefore, the feasible region is an integral polyhedron.  
	Lattice path counting and binomial coefficients are central to enumerative combinatorics \cite{gessel1985,stembridge1990}. One of the deepest combinatorial interpretations of binomial determinants comes
	from the \emph{Lindström-Gessel-Viennot (LGV) lemma} (Lemma \ref{lgv-lemma}). 
	Applying LGV lemma to the integer lattice $\mathbb{Z}^2$ with unit weights,
	the entry $\binom{m+n-i-1}{m-1}$ counts the number of lattice paths (using unit
	steps East or North) from one fixed source to a target depending on $i$
	\cite{gessel1985,krattenthaler1999}. 
	
	In this article, we study a class of matrices constructed using entries derived from Pascal's triangle and analyze their determinants. Specifically, for a given integer $m \geq 2$, we define an $n \times n$ matrix $M = (m_{i,j})$ where each entry is based on a modified binomial expression. Surprisingly, despite the complex structure of the matrix, its determinant always equals 1.
	
	We begin by visualizing the diagonals of Pascal's triangle to highlight their relationship to classical figurate numbers. We then define the matrix family in detail, prove the main determinant identity, and explore specific examples, such as triangular and pentatope numbers appearing as matrix elements. These insights underscore the interplay between combinatorics, matrix theory, and geometric number sequences.
	
	The identity $\det(M) = 1$ in our main theorem (Theorem \ref{Thm:main-thm}) then asserts
	that there is essentially a unique family of non-intersecting lattice paths
	compatible with the endpoint constraints imposed by $M$, a remarkable
	combinatorial rigidity. 

	\begin{figure}[H]
		\begin{center}
			\begin{tikzpicture}[scale=0.65]
				\definecolor{ones}{RGB}{128, 128, 128}     
				\definecolor{natural}{RGB}{255, 0, 0}      
				\definecolor{triangular}{RGB}{0, 150, 0}   
				\definecolor{tetrahedral}{RGB}{0, 0, 255}  
				\definecolor{pentatope}{RGB}{148, 0, 211}  
				\definecolor{fivesimplex}{RGB}{255, 165, 0} 
				\definecolor{sixsimplex}{RGB}{255, 20, 147} 
				
				\node[color=ones] at (0,10) {1};
				
				\node[color=ones] at (-0.6,9) {1};
				\node[color=natural] at (0.6,9) {1};
				
				\node[color=ones] at (-1.2,8) {1};
				\node[color=natural] at (0,8) {2};
				\node at (1.2,8) {1};
				
				\node[color=ones] at (-1.8,7) {1};
				\node[color=natural] at (-0.6,7) {3};
				\node[color=triangular] at (0.6,7) {3};
				\node at (1.8,7) {1};
				
				\node[color=ones] at (-2.4,6) {1};
				\node[color=natural] at (-1.2,6) {4};
				\node[color=triangular] at (0,6) {6};
				\node at (1.2,6) {4};
				\node at (2.4,6) {1};
				
				\node[color=ones] at (-3,5) {1};
				\node[color=natural] at (-1.8,5) {5};
				\node[color=triangular] at (-0.6,5) {10};
				\node[color=tetrahedral] at (0.6,5) {10};
				\node at (1.8,5) {5};
				\node at (3,5) {1};
				
				\node[color=ones] at (-3.6,4) {1};
				\node[color=natural] at (-2.4,4) {6};
				\node[color=triangular] at (-1.2,4) {15};
				\node[color=tetrahedral] at (0,4) {20};
				\node at (1.2,4) {15};
				\node at (2.4,4) {6};
				\node at (3.6,4) {1};
				
				\node[color=ones] at (-4.2,3) {1};
				\node[color=natural] at (-3,3) {7};
				\node[color=triangular] at (-1.8,3) {21};
				\node[color=tetrahedral] at (-0.6,3) {35};
				\node[color=pentatope] at (0.6,3) {35};
				\node at (1.8,3) {21};
				\node at (3,3) {7};
				\node at (4.2,3) {1};
				
				\node[color=ones] at (-4.8,2) {1};
				\node[color=natural] at (-3.6,2) {8};
				\node[color=triangular] at (-2.4,2) {28};
				\node[color=tetrahedral] at (-1.2,2) {56};
				\node[color=pentatope] at (0,2) {70};
				\node at (1.2,2) {56};
				\node at (2.4,2) {28};
				\node at (3.6,2) {8};
				\node at (4.8,2) {1};
				
				\node[color=ones] at (-5.4,1) {1};
				\node[color=natural] at (-4.2,1) {9};
				\node[color=triangular] at (-3,1) {36};
				\node[color=tetrahedral] at (-1.8,1) {84};
				\node[color=pentatope] at (-0.6,1) {126};
				\node[color=fivesimplex] at (0.6,1) {126};
				\node at (1.8,1) {84};
				\node at (3,1) {36};
				\node at (4.2,1) {9};
				\node at (5.4,1) {1};
				
				\node[color=ones] at (-6,0) {1};
				\node[color=natural] at (-4.8,0) {10};
				\node[color=triangular] at (-3.6,0) {45};
				\node[color=tetrahedral] at (-2.4,0) {120};
				\node[color=pentatope] at (-1.2,0) {210};
				\node[color=fivesimplex] at (0,0) {252};
				\node at (1.2,0) {210};
				\node at (2.4,0) {120};
				\node at (3.6,0) {45};
				\node at (4.8,0) {10};
				\node at (6,0) {1};
				
				\node[color=ones] at (-6.6,-1) {1};
				\node[color=natural] at (-5.4,-1) {11};
				\node[color=triangular] at (-4.2,-1) {55};
				\node[color=tetrahedral] at (-3,-1) {165};
				\node[color=pentatope] at (-1.8,-1) {330};
				\node[color=fivesimplex] at (-0.6,-1) {462};
				\node[color=sixsimplex] at (0.6,-1) {462};
				\node at (1.8,-1) {330};
				\node at (3,-1) {165};
				\node at (4.2,-1) {55};
				\node at (5.4,-1) {11};
				\node at (6.6,-1) {1};
				
				\draw[ones, thick, ->] (-0.4,9.2) -- (-6.4,-0.8);
				\node[color=ones] at (-7.5,4) {\textbf{1st diagonal: All 1s}};
				
				\draw[natural, thick, ->] (0.4,8.8) -- (-5.2,-0.8);
				\node[color=natural] at (-7.5,7) {\textbf{2nd diagonal}};
				\node[color=natural] at (-7.5,6.5) {\textbf{Natural numbers}};
				
				\draw[triangular, thick, ->] (0.4,6.8) -- (-4.0,-0.8);
				\node[color=triangular] at (-8.5,8.5) {\textbf{3rd diagonal}};
				\node[color=triangular] at (-8.5,8) {\textbf{Triangular numbers}};
				
				\draw[tetrahedral, thick, ->] (0.4,4.8) -- (-2.8,-0.8);
				\node[color=tetrahedral] at (7.5,7) {\textbf{4th diagonal}};
				\node[color=tetrahedral] at (7.5,6.5) {\textbf{Tetrahedral numbers}};
				
				\draw[pentatope, thick, ->] (0.4,2.8) -- (-1.6,-0.8);
				\node[color=pentatope] at (8.5,5) {\textbf{5th diagonal}};
				\node[color=pentatope] at (8.5,4.5) {\textbf{Pentatope numbers}};
				
				\draw[fivesimplex, thick, ->] (0.4,0.8) -- (-0.4,-0.8);
				\node[color=fivesimplex] at (9,2.5) {\textbf{6th diagonal}};
				\node[color=fivesimplex] at (9,2) {\textbf{5-simplex numbers}};
				
				\draw[sixsimplex, thick, ->] (0.4,-1.2) -- (0.4,-1.2);
				\node[color=sixsimplex] at (8.5,0) {\textbf{7th diagonal}};
				\node[color=sixsimplex] at (9.5,-0.5) {\textbf{6-simplex numbers}};
				
			\end{tikzpicture}
			\caption{Pascal's Triangle with Seven Highlighted Diagonals}
			
		\end{center}
	\end{figure}
	
	\subsection{Formulas Along the Diagonals of Pascal's Triangle}
	
	Pascal's Triangle is not just a combinatorial tool-it's a rich tapestry of number patterns. Each diagonal corresponds to a familiar and meaningful sequence, many of which arise naturally in geometry and counting problems. Below, we explore the formulas and interpretations of the early diagonals, then generalize to higher-dimensional structures.
	
	\subsubsection*{1st Diagonal: All Ones}
	
	The first diagonal consists entirely of ones:
	\[
	\binom{n}{0} = 1 \quad \text{for all } n \geq 0
	\]
	This reflects the fact that there is exactly one way to choose nothing from a set of $n$ elements.
	
	\textbf{Sequence:} 1, 1, 1, 1, 1, 1, 1, ...
	
	\subsubsection*{2nd Diagonal: Natural Numbers}
	
	The second diagonal gives the natural numbers:
	\[
	\binom{n}{1} = n \quad \text{for all } n \geq 1
	\]
	There are $n$ ways to choose one item from $n$ options.
	
	\textbf{Sequence:} 1, 2, 3, 4, 5, 6, 7, ...
	
	\subsubsection*{3rd Diagonal: Triangular Numbers}
	
	The third diagonal reveals the triangular numbers:
	\[
	\binom{n}{2} = \frac{n(n-1)}{2} = T_{n-1} \quad \text{for all } n \geq 2
	\]
	These count the number of ways to choose two elements from $n$, and geometrically represent the number of dots needed to form an equilateral triangle.
	
	\textbf{Sequence:} 1, 3, 6, 10, 15, 21, 28, ...
	
	\subsection{General Pattern: \texorpdfstring{$k$}{k}-Dimensional Figurate Numbers}
	
	Each $(k+1)$-th diagonal in Pascal's Triangle contains the $k$-dimensional figurate numbers, also known as \textit{$k$-simplex numbers}:
	\[
	\binom{n}{k} = \frac{n!}{k!(n-k)!} \quad \text{for } n \geq k
	\]
	
	These numbers arise naturally in both combinatorics and geometry. They count the number of ways to form $k$-dimensional structures from $n$ objects:
	
	\begin{itemize}
		\item \textbf{0-simplex (point)}: All 1s ($\binom{n}{0}$)
		\item \textbf{1-simplex (line segment)}: Natural numbers ($\binom{n}{1}$)
		\item \textbf{2-simplex (triangle)}: Triangular numbers ($\binom{n}{2}$)
		\item \textbf{3-simplex (tetrahedron)}: Tetrahedral numbers ($\binom{n}{3}$)
		\item \textbf{4-simplex (pentatope)}: Pentatope numbers ($\binom{n}{4}$)
		\item \textbf{5-simplex:} 5D simplex numbers ($\binom{n}{5}$)
		\item \textbf{6-simplex:} 6D simplex numbers ($\binom{n}{6}$)
	\end{itemize}
	
	Each $k$-simplex number represents the number of ways to select $k$ objects from $n$, and geometrically corresponds to the number of vertices in a $k$-dimensional simplex formed from $n$ elements.

	\section{Main Theorem}
	
	In this section, we present our main result and illustrate it with a concrete example.
	
	\begin{theorem}\label{Thm:main-thm}
		Let \( m, n \in \mathbb{N} \) with \( 2 \leq m \leq n \), and let \( M = (m_{i,j})_{n \times n} \) be the matrix defined by
		\[
		m_{i,j} = 
		\begin{cases}
			\displaystyle \binom{m+n-i-1}{m-1}, & \text{if } i \geq j, \\[6pt]
			\displaystyle \binom{m+n-i-1}{m-1} - \binom{m-2 + j - i}{m-1}, & \text{if } i < j.
		\end{cases}
		\]
		Then \( \det(M) = 1 \).
	\end{theorem}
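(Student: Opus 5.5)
The plan is a purely algebraic argument by column operations rather than via the Lindström–Gessel–Viennot lemma (Lemma \ref{lgv-lemma}); the path interpretation could be layered on afterwards. Write $a_i=\binom{m+n-i-1}{m-1}$ and $u_k=\binom{m-2+k}{m-1}$ for $k\ge 1$. The first observation is that
\[
M = a\,\mathbf{1}^{\top} - U,
\]
where $a=(a_1,\dots,a_n)^{\top}$, $\mathbf{1}$ is the all-ones vector, and $U$ is the strictly upper triangular Toeplitz matrix with $U_{i,j}=u_{j-i}$ for $j>i$ and $U_{i,j}=0$ otherwise. Every column of $a\mathbf{1}^{\top}$ is the same vector $a$, so subtracting adjacent columns kills this rank-one part.

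\textbf{Step 1 (column operations).} For $j=n-1,n-2,\dots,1$, in that order, replace column $j+1$ by column $j+1$ minus column $j$; this does not change the determinant. Column $1$ stays $a-U_{\cdot,1}=a$, since $U$ has zero first column. For $c\ge 2$, the new column $c$ is $-(U_{\cdot,c}-U_{\cdot,c-1})$, with entries as follows.
\begin{itemize}
\item For $i\le c-1$, Pascal's rule gives $-\bigl(u_{c-i}-u_{c-1-i}\bigr)=-\binom{m-3+(c-i)}{m-2}$. At $i=c-1$ this uses the convention $u_0=\binom{m-2}{m-1}=0$.
\item For $i\ge c$, the entry is $0$, because both $U_{i,c}$ and $U_{i,c-1}$ vanish.
\end{itemize}

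\textbf{Step 2 (shape of the new matrix).} Columns $2,\dots,n$ now have nonzero entries only in rows $1,\dots,n-1$. Restricted to those rows, they form an upper triangular $(n-1)\times(n-1)$ matrix. Its diagonal entries (row $c-1$, column $c$) all equal $-\binom{m-2}{m-2}=-1$. This is where the hypothesis $m\ge 2$ is used: it makes the lower index $m-2$ nonnegative, so the diagonal entry is genuinely $1$ and Pascal's rule applies; for $m=2$ all these entries are simply $-\binom{k-1}{0}=-1$. The hypothesis $m\le n$ does not appear to be needed.

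\textbf{Step 3 (expansion).} The last row of the new matrix is $(a_n,0,\dots,0)$, with $a_n=\binom{m-1}{m-1}=1$. Expanding along this row gives
\[
\det M=(-1)^{n+1}a_n\cdot\det(\text{upper triangular minor})=(-1)^{n+1}(-1)^{n-1}=1 .
\]

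The main point needing care is Step 1: the boundary cases $i=c-1$ and $i\ge c$, and the order of the column operations, which must run from the right so that each subtraction uses original columns. Everything else is a direct application of Pascal's rule $\binom{N}{r}-\binom{N-1}{r}=\binom{N-1}{r-1}$.
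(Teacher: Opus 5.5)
Your proof is correct, and it takes a genuinely different route from the paper's.

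\textbf{The paper's route.} The paper never manipulates $M$ algebraically. It realizes $M$ as the path matrix of an $m\times n$ grid digraph whose bottom row carries only westward edges. A bijection with $(m-1)$-subsets (in effect the hockey-stick identity applied to $\sum_{t\ge\max(i,j)}\binom{t-i+m-2}{m-2}$) shows that the number of paths from $u_i$ to $v_j$ is $m_{i,j}$. The paper then shows that the only vertex-disjoint path system is the one with $\sigma=\mathrm{id}$ in which every path goes straight south, and concludes with Lemma~\ref{lgv-lemma}. This buys a combinatorial meaning for every entry and for the value $1$, which is the paper's declared aim. The cost is two separate verifications: the path count, whose write-up in the paper misstates $|Z|$ as $m+n-i+1$ instead of $m+n-i-1$, and the disjointness lemma.

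\textbf{What your route buys.} Your column differencing is shorter, self-contained, and more revealing. The new column $c$ is $-(U_{\cdot,c}-U_{\cdot,c-1})$. It vanishes in rows $i\ge c$ and equals $-U_{c-1,c}$ in row $c-1$ for \emph{any} strictly upper triangular $U$. So your argument actually proves $\det(a\mathbf{1}^{\top}-U)=a_n\prod_{i=1}^{n-1}U_{i,i+1}$ for an arbitrary vector $a$. The theorem therefore reduces to the two facts $a_n=u_1=\binom{m-1}{m-1}=1$, and none of the other binomial values matter.

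\textbf{Minor points.} Your Pascal's-rule computation of the entries above the diagonal is unnecessary. Your remark that $m\ge 2$ is used there is also slightly misplaced: the diagonal entries are $-u_1=-1$ directly, because $U_{c-1,c-1}=0$ by strict triangularity. The identity even holds for $m=1$, where $M$ is lower unitriangular. You are right that $m\le n$ is superfluous, and it is equally superfluous in the paper's lattice-path argument.
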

	
	\begin{example}[Pentatope Numbers in the Matrix]
		Let \( m = 5 \) and \( n = 7 \). The matrix \( M \) constructed from the binomial rule in Theorem~\ref{Thm:main-thm} takes the form of a \( 7 \times 7 \) matrix as shown below:
		\begin{figure}[H]
			\centering
			\begin{equation*}
				\begin{gathered}
					M_7 = 
					\begin{pmatrix}
						\binom{10}{4} & \binom{10}{4} - \binom{4}{4} & \binom{10}{4} - \binom{5}{4} & \binom{10}{4} - \binom{6}{4} & \binom{10}{4} - \binom{7}{4} & \binom{10}{4} - \binom{8}{4} & \binom{10}{4} - \binom{9}{4} \\[0.6em]
						\binom{9}{4} & \binom{9}{4} & \binom{9}{4} - \binom{4}{4} & \binom{9}{4} - \binom{5}{4} & \binom{9}{4} - \binom{6}{4} & \binom{9}{4} - \binom{7}{4} & \binom{9}{4} - \binom{8}{4} \\[0.6em]
						\binom{8}{4} & \binom{8}{4} & \binom{8}{4} & \binom{8}{4} - \binom{4}{4} & \binom{8}{4} - \binom{5}{4} & \binom{8}{4} - \binom{6}{4} & \binom{8}{4} - \binom{7}{4} \\[0.6em]
						\binom{7}{4} & \binom{7}{4} & \binom{7}{4} & \binom{7}{4} & \binom{7}{4} - \binom{4}{4} & \binom{7}{4} - \binom{5}{4} & \binom{7}{4} - \binom{6}{4} \\[0.6em]
						\binom{6}{4} & \binom{6}{4} & \binom{6}{4} & \binom{6}{4} & \binom{6}{4} & \binom{6}{4} - \binom{4}{4} & \binom{6}{4} - \binom{5}{4} \\[0.6em]
						\binom{5}{4} & \binom{5}{4} & \binom{5}{4} & \binom{5}{4} & \binom{5}{4} & \binom{5}{4} & \binom{5}{4} - \binom{4}{4} \\[0.6em]
						\binom{4}{4} & \binom{4}{4} & \binom{4}{4} & \binom{4}{4} & \binom{4}{4} & \binom{4}{4} & \binom{4}{4}
					\end{pmatrix} \\[1.5em]
					=
					\begin{tikzpicture}[>=Stealth, baseline=(m.center)]
						\matrix (m) [matrix of math nodes,
						left delimiter=(, right delimiter=),
						nodes={minimum width=0.9cm, minimum height=0.7cm, anchor=center},
						column sep=0.25cm, row sep=0.15cm]
						{
							210 & 209 & 205 & 195 & 175 & 140 & 84 \\
							126 & 126 & 125 & 121 & 111 & 91 & 56 \\
							70 & 70 & 70 & 69 & 65 & 55 & 35 \\
							35 & 35 & 35 & 35 & 34 & 30 & 20 \\
							15 & 15 & 15 & 15 & 15 & 14 & 10 \\
							5 & 5 & 5 & 5 & 5 & 5 & 4 \\
							1 & 1 & 1 & 1 & 1 & 1 & 1 \\
						};
						\draw[->, thick, red] (m-7-7.center) -- (m-6-6.center) -- (m-5-5.center) -- (m-4-4.center) -- (m-3-3.center) -- (m-2-2.center) -- (m-1-1.center);
						\draw[->, thick, blue] (m-7-6.center) -- (m-6-5.center) -- (m-5-4.center) -- (m-4-3.center) -- (m-3-2.center) -- (m-2-1.center);
						\draw[->, thick, green!60!black] (m-7-5.center) -- (m-6-4.center) -- (m-5-3.center) -- (m-4-2.center) -- (m-3-1.center);
						\draw[->, thick, orange] (m-7-4.center) -- (m-6-3.center) -- (m-5-2.center) -- (m-4-1.center);
						\draw[->, thick, purple] (m-7-3.center) -- (m-6-2.center) -- (m-5-1.center);
						\draw[->, thick, cyan] (m-7-2.center) -- (m-6-1.center);
					\end{tikzpicture}
				\end{gathered}
			\end{equation*}
			\caption{Matrix \( M \) with symbolic binomial entries and their simplified numerical form. Colored arrows indicate the diagonals corresponding to Pentatope numbers.}
			\label{fig:matrix_m3}
		\end{figure}
	\end{example}

	\section{Proof of main theorem}
	Combinatorial interpretations of determinants can provide deeper insight into their evaluations; this is especially true when the entries of a matrix admit natural
	graph-theoretic descriptions~\cite{18,14,11}. In this section, we present a bijective proof of our main result, Theorem \ref{Thm:main-thm}.
	Before proceeding to the proof, let us recall the celebrated \emph{Gessel–Lindström–Viennot} lemma (see~\cite{15} for details). For completeness, we reproduce the lemma from~\cite{15} below. Let $\Gamma$ be a weighted, acyclic digraph. The vertex set and the edge set of $\Gamma$ are denoted by $V(\Gamma)$ and $E(\Gamma)$, respectively.  
	A \emph{path} in $\Gamma$ is a sequence of distinct vertices $v_1, v_2, \ldots, v_r$ such that $(v_i, v_{i+1}) \in E(\Gamma)$ for $i = 1, \ldots, r-1$, where the edge is directed from $v_i$ to $v_{i+1}$.  
	For simplicity, we denote such a path by $v_1 v_2 \cdots v_r$, and an edge $(v_i, v_{i+1})$ by $v_i v_{i+1}$.
	The \emph{weight} of a path $P$, denoted by $w(P)$, is the product of the weights of all edges in $P,$ and the \emph{length} of $P$, denoted by $\ell(P)$, is the number of edges in $P$. 
	Suppose that $U = \{u_1, u_2, \ldots, u_n\}$ and $V = \{v_1, v_2, \ldots, v_n\}$ are two (not necessarily disjoint) $n$-element subsets of $V(\Gamma)$.  
	To these, we associate the \emph{path matrix} $M = (m_{ij})_{n \times n}$, where
	\[
	m_{ij} = \sum_{P : u_i \rightarrow v_j} w(P),
	\]
	and $P : u_i \rightarrow v_j$ denotes a path from $u_i$ to $v_j$.
	
	A \emph{path system} from $U$ to $V$ is an ordered pair $(\mathcal{P}, \sigma)$, where $\sigma$ is a permutation of $\{1, 2, \ldots, n\}$ and $\mathcal{P} = \{P_i : u_i \rightarrow v_{\sigma(i)} \mid 1 \le i \le n\}$ is a collection of $n$ paths.  
	The sign of a path system $(\mathcal{P}, \sigma)$ is $\operatorname{sgn}(\sigma)$, and its \emph{weight} is
	\[
	w(\mathcal{P}, \sigma) = \prod_{i=1}^n w(P_i).
	\]
	A path system is called \emph{vertex-disjoint} if no two paths in it share a common vertex. Let $VD_{\Gamma}$ denote the family of all vertex-disjoint path systems in $\Gamma$.
	
	\begin{lemma}[Gessel–Lindström–Viennot lemma, \cite{15}]\label{lgv-lemma}
		Let $\Gamma = (V(\Gamma), E(\Gamma))$ be a weighted, acyclic digraph.  
		Suppose $U = \{u_1, u_2, \ldots, u_n\}$ and $V = \{v_1, v_2, \ldots, v_n\}$ are two (not necessarily disjoint) $n$-element subsets of $V(\Gamma)$, and let $M$ be the corresponding path matrix from $U$ to $V$. Then
		\[
		\det(M) = \sum_{(\mathcal{P}, \sigma) \in VD_{\Gamma}} \operatorname{sgn}(\sigma) \, w(\mathcal{P}, \sigma),
		\]
		where $VD_{\Gamma}$ denote the family of all vertex-disjoint path systems in $\Gamma$.
	\end{lemma}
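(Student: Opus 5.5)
The plan is the standard sign-reversing involution argument. \emph{Throughout, assume $\Gamma$ is finite}, or more generally that each $m_{ij}$ is a finite sum. Under this assumption every entry of the path matrix is well defined and all the rearrangements of sums below are legitimate.

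First, expand $\det(M)$ by the Leibniz formula and distribute the products of sums:
\[
\det(M)=\sum_{\sigma}\operatorname{sgn}(\sigma)\prod_{i=1}^n\sum_{P:u_i\to v_{\sigma(i)}}w(P)
=\sum_{(\mathcal{P},\sigma)}\operatorname{sgn}(\sigma)\,w(\mathcal{P},\sigma).
\]
The last sum runs over \emph{all} path systems from $U$ to $V$, not only the vertex-disjoint ones. Paths of length $0$ (when $u_i=v_j$) are allowed, with weight $1$. It therefore suffices to show that the intersecting path systems contribute $0$ in total. For this I will construct an involution $\varphi$ on the intersecting systems that preserves the weight and reverses the sign.

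\textbf{Construction of $\varphi$.} Let $(\mathcal{P},\sigma)$ be intersecting.
\begin{enumerate}[(i)]
\item Let $i$ be the smallest index such that $P_i$ shares a vertex with some other path.
\item Let $x$ be the first vertex along $P_i$ that lies on some other path.
\item Let $j\neq i$ be the smallest index with $x\in P_j$.
\end{enumerate}
Now swap tails at $x$. The new path $P_i'$ is $P_i$ up to $x$ followed by $P_j$ after $x$. The new path $P_j'$ is $P_j$ up to $x$ followed by $P_i$ after $x$. All other paths are unchanged, and the new permutation is $\sigma'=\sigma\circ(i\,j)$.

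\textbf{The new paths are genuine paths.} We must check that $P_i'$ and $P_j'$ repeat no vertex. A repeated vertex in $P_i'$ would be a vertex $y$ of $P_i$ strictly before $x$ that also lies on $P_j$ after $x$. Then $y$ would be a vertex of $P_i$ lying on another path and occurring before $x$, contradicting the choice of $x$. The case of $P_j'$ is the same argument, or follows directly from acyclicity, since the prefix and tail meet only at $x$. Hence $(\mathcal{P}',\sigma')$ is again a path system.

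\textbf{Weight is preserved and sign is reversed.} The multiset of edges used by the system is unchanged, so $w(\mathcal{P}',\sigma')=w(\mathcal{P},\sigma)$. On the other hand $\operatorname{sgn}(\sigma')=-\operatorname{sgn}(\sigma)$.

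\textbf{$\varphi$ is an involution.} This is the main point: applying the rule to $(\mathcal{P}',\sigma')$ must select the same triple $(i,x,j)$, so that swapping again restores the original system.
\begin{itemize}
\item \emph{Same index $i$.} For $k\ne i,j$ the path $P_k$ is untouched. The union of the vertex sets of the paths other than $P_k$, counted with multiplicity, is unchanged. Hence every $P_k$ with $k<i$ still meets no other path. Meanwhile $P_i'$ meets $P_j'$ at $x$. So $i$ is again the minimal intersecting index.
\item \emph{Same vertex $x$.} The prefix of $P_i'$ before $x$ coincides with the prefix of $P_i$. None of its vertices lies on $P_k$ for $k\ne i,j$. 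None lies on $P_j'$ either: a vertex on the prefix of $P_j$ would contradict the choice of $x$, and a vertex on the tail of $P_i$ would be a repetition in $P_i$. So $x$ is again the first intersection vertex along $P_i'$.
\item \emph{Same index $j$.} The set of indices $k\ne i$ whose path passes through $x$ is unchanged, since both $P_i'$ and $P_j'$ still pass through $x$. So $j$ is selected again.
\end{itemize}

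\textbf{Conclusion.} The intersecting systems are paired into couples of equal weight and opposite sign. Their contributions cancel, and only the vertex-disjoint systems $VD_\Gamma$ survive, which is exactly the formula in Lemma~\ref{lgv-lemma}.

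The main obstacle is the involution check. A careless choice of rule, for example taking the last intersection point instead of the first along $P_i$, can fail to be stable under the swap. That is why the verification that $(i,x,j)$ is recovered after swapping is where the care is needed.
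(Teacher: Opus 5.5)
The paper does not prove this lemma; it quotes it from \cite{15} and only applies it. Your argument is the standard Lindstr\"om--Gessel--Viennot tail-swapping involution, and it is correct. The Leibniz expansion turns $\det(M)$ into a signed sum over all path systems. Your selection rule (minimal intersecting index $i$, first shared vertex $x$ on $P_i$, minimal $j\neq i$ through $x$) is recovered after the swap, so the intersecting systems cancel in pairs.

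Two small points should be tightened.
\begin{enumerate}
\item For $P_j'$ the ``same argument'' does not apply. A repeated vertex $y$ of $P_j'$ would lie on $P_j$ before $x$ and on $P_i$ after $x$, and that does not contradict the choice of $x$. What rules it out is acyclicity: such a $y$ would give a closed directed walk $y\to x\to y$. Acyclicity in fact handles both new paths at once.
\item You should state explicitly that $j>i$. This holds because $P_j$ meets $P_i$ at $x$ and $i$ was chosen minimal. It is what guarantees, in your ``same index $i$'' step, that every $P_k$ with $k<i$ is untouched by the swap.
\end{enumerate}

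Your finiteness assumption is implicit in the lemma, since it is needed for $M$ to be defined, and it holds in the paper's application.
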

	
	\noindent
	Note that the sum is $0$ if no path system exists from $U$ to $V$.

	\begin{figure}[H]
		\centering
		\begin{tikzpicture}[scale=1,
			>=stealth,
			vertex/.style={circle, fill=blue!70, inner sep=2.5pt, draw=blue!80, thick}
			]
			\node at (0,0.3) {$\mathbf{u_1}$};
			\node at (2,0.3) {$\mathbf{u_2}$};
			\node at (4,0.3) {$\mathbf{u_3}$};
			\node at (6,0.3) {$\mathbf{u_4}$};
			\node at (9,0.3) {$\mathbf{u_{n-1}}$};
			\node at (11,0.3) {$\mathbf{u_n}$};
			\node at (0,-9.3) {$\mathbf{v_1}$};
			\node at (2,-9.3) {$\mathbf{v_2}$};
			\node at (4,-9.3) {$\mathbf{v_3}$};
			\node at (6,-9.3) {$\mathbf{v_4}$};
			\node at (9,-9.3) {$\mathbf{v_{n-1}}$};
			\node at (11,-9.3) {$\mathbf{v_n}$};
			\node at (-.3,-7.3) {$\mathbf{w_1}$};
			\node at (1.7,-7.3) {$\mathbf{w_2}$};
			\node at (3.7,-7.3) {$\mathbf{w_3}$};
			\node at (5.7,-7.3) {$\mathbf{w_4}$};
			\node at (8.5,-7.3) {$\mathbf{w_{n-1}}$};
			\node at (10.7,-7.3) {$\mathbf{w_n}$};
			\node at (-.3,-8.3) {$\mathbf{e_1}$};
			\node at (1.7,-8.3) {$\mathbf{e_2}$};
			\node at (3.7,-8.3) {$\mathbf{e_3}$};
			\node at (5.7,-8.3) {$\mathbf{e_4}$};
			\node at (8.5,-8.3) {$\mathbf{e_{n-1}}$};
			\node at (10.7,-8.3) {$\mathbf{e_n}$};
			\foreach \x in {0,2,4,6, 9, 11} {
				\foreach \y in {-0.2,-2,-4,-7,-9} {
					\fill (\x,\y) circle (2pt);
				}
			}
			
			\foreach \x in {0,2,4,6, 9, 11} {
				\draw[->, thick, green!70] (\x,-0.2) -- (\x,-1.9);
				\draw[->, thick, green] (\x,-2) -- (\x,-3.8);
				\draw[->, thick, green] (\x,-7) -- (\x,-8.9);  blue!70
				
				\draw[dashed] (\x,-4) -- (\x,-7);
			}
			
			\foreach \y in {-0.2,-2,-4,-7} {
				\draw[->, thick, blue] (0,\y) -- (1.9,\y);
				\draw[->, thick, blue] (2,\y) -- (3.9,\y);
				\draw[->, thick, blue] (4,\y) -- (5.9,\y);

				\draw[dashed] (6,\y) -- (8.9,\y);

					\draw[->, thick, blue] (8.9,\y) -- (10.9,\y);
					
				}
				\draw[->, thick, red] (1.9,-9) -- (.1,-9);
				\draw[->, thick, red] (3.9,-9) -- (2.1,-9);
				\draw[->, thick, red] (5.9,-9) -- (4.1,-9);
				\draw[dashed] (6,-9) -- (9,-9);
				\draw[->, thick,red] (10.9,-9) -- (9.1,-9);
				
			\end{tikzpicture}
			\caption{An \( m \times n \) directed, weighted grid graph in which each edge has weight \( 1 \). For each \( i \in [n] \), the edge from \( w_i \) to \( v_i \) is denoted by \( e_i \).}
			\label{fig: general grid graph}	
		\end{figure}
		
		In this section, we present the following theorem, which enumerates the number of lattice paths in the grid graph as depicted in Figure~\ref{fig: general grid graph}.
		
		\begin{theorem}\label{Thm:lattic-path-counting}
			Let \( \Gamma \) be the \( m \times n \) grid graph as defined in Figure~\ref{fig: general grid graph}, where \( m, n \in \mathbb{N} \) with \( 2 \leq m \leq n \), and assume that each edge has weight \( 1 \). Then the number of lattice paths from vertex \( u_i \) to vertex \( v_j \) is given by
			\[
			\ell_{i,j} =
			\begin{cases}
				\displaystyle\binom{m+n-i-1}{m-1}, & \text{if } i \geq j, \\[8pt]
				\displaystyle\binom{m+n-i-1}{m-1} - \binom{m - 2 + j - i}{m-1}, & \text{if } i < j.
			\end{cases}
			\]
		\end{theorem}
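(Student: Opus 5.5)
The plan is to classify paths from \(u_i\) to \(v_j\) by the unique vertical edge \(e_k\) they use to enter the bottom row, and then to sum the counts with the hockey-stick identity. I read Figure~\ref{fig: general grid graph} as having \(m\) rows of vertices in total. The top \(m-1\) rows, from the row of the \(u\)'s down to the row of the \(w\)'s, carry rightward (blue) and downward (green) edges. The bottom row consists of \(v_1,\dots,v_n\), joined to the \(w\)'s by the edges \(e_k=w_kv_k\) and carrying leftward (red) edges \(v_{k}\to v_{k-1}\). This reading is forced by the small cases: for \(m=5,n=7\) the last two rows of \(M_7\) are \(5,\dots,5,4\) and \(1,\dots,1\), and these match only if \(u\) and \(w\) are separated by \(m-2\) vertical steps, not \(m-1\). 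I will state this convention explicitly at the start of the proof.

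\textbf{Step 1 (decomposition).} Every path from \(u_i\) to \(v_j\) must reach the bottom row. The only edges entering the bottom row are the \(e_k\), and no edge leaves it upward. Hence every such path uses exactly one edge \(e_k\), and factors uniquely as
\[
u_i \rightsquigarrow w_k \xrightarrow{e_k} v_k \rightsquigarrow v_j .
\]
The first segment stays in the upper \((m-1)\times n\) grid, which forces \(k\ge i\) because horizontal moves there go only right. The last segment is the unique leftward path along the bottom row, which exists if and only if \(k\ge j\). Conversely, any choice of \(k\ge\max(i,j)\) and any monotone path \(u_i\rightsquigarrow w_k\) gives a valid path, and it is automatically simple because the graph is acyclic.

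\textbf{Step 2 (counting the pieces).} A path from \(u_i\) to \(w_k\) in the upper grid consists of \(m-2\) down steps and \(k-i\) right steps in some order. There are therefore \(\binom{m-2+k-i}{m-2}\) of them. It follows that
\[
\ell_{i,j}=\sum_{k=\max(i,j)}^{n}\binom{m-2+k-i}{m-2}.
\]

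\textbf{Step 3 (summation).} By the hockey-stick identity \(\sum_{t=0}^{N}\binom{m-2+t}{m-2}=\binom{m-1+N}{m-1}\), the full sum from \(k=i\) to \(n\) equals \(\binom{m+n-i-1}{m-1}\). This settles the case \(i\ge j\). When \(i<j\), I subtract the terms with \(i\le k\le j-1\). By the same identity with \(N=j-1-i\), these contribute \(\binom{m-2+j-i}{m-1}\), which gives the second formula.

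The only real obstacle is pinning down the geometry of the figure, namely how many rows separate \(u\) from \(w\) and that the bottom edges point left. Once that convention is fixed, the argument is a routine unique-decomposition count followed by hockey-stick summation. The condition \(m\ge2\) ensures that the binomial coefficients with lower index \(m-2\) make sense.
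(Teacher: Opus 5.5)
Your proof is correct and is essentially the paper's argument. Both classify paths by the unique edge $e_k$ entering the bottom row and handle $i<j$ by discarding the paths through $e_i,\dots,e_{j-1}$; the paper's bijection with $(m-1)$-subsets of $\{i,\dots,m+n-2\}$ (largest element $k+m-2$ recording $k$, the other $m-2$ elements recording the south-move positions) is just the standard combinatorial proof of the hockey-stick identity that you invoke directly, and your explicit sum $\sum_{k=\max(i,j)}^{n}\binom{m-2+k-i}{m-2}$ also makes clear that the paper's stated $|Z|=m+n-i+1$ is a slip for $m+n-i-1$.
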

		
		\begin{proof}
			We give a bijective proof of this theorem. 
			
			\medskip
			\noindent\textbf{Case 1:} Let $i \geq j$. Then note that the number of lattice paths from $u_i$ to $v_i$ is equal to the number of lattice paths from $u_i$ to $v_j$ for any $j \leq i$. So, for this case, it is enough to compute the lattice paths from $u_i$ to $v_i$ in the grid graph described in Figure \ref{fig: general grid graph}. 
			
			Let $A$ be the set of all lattice paths from the vertex $u_i$ to the vertex $v_i$ in the $m \times n$ grid graph described in Figure~\ref{fig: general grid graph}. According to the direction of edges, one can easily see that one travels first from $u_i$ to $w_t$ only in the right and south directions, then by using south and left (red colored edges) directions, reaches the point $v_i$. For each $k \in [n]$, we denote by $A_k \subseteq A$ the set of all paths from $u_i$ to $v_i$ via the edge $e_k$. Then clearly, $A = \bigcup_{k=1}^n A_k$. Let $B$ be the set of all $(m-1)$-subsets of the set $Z = \{i, i+1, \ldots, m+n-2\}.$ We will show a bijection $f$ from $A$ to the set $B$. 
			
			Suppose a lattice path $L \in A = \bigcup_{k=1}^n A_k$. Then $L \in A_t$ for some $t \in [n]$. That is, $L$ is a lattice path from $u_i$ to $v_i$ via the edge $e_t$. Therefore, it takes exactly $(t-i) + (m-2)$ moves to get from $u_i$ to $w_t$. Among those moves, $(t-i)$ of them have to be going right (or east), denoted by $E$, and $(m-2)$ of them have to be going down (or south), denoted by $S$. Note that there is a bijective mapping between the set of paths connecting the points $u_i$ and $w_t$ and the set of distinct arrangements of $(t-i)$ $E$'s and $(m-2)$ $S$'s. 
			
			Now, let the $(m-2)$ south moves of $L$ occur at positions $x_1, x_2, \ldots, x_{m-2}$, where $x_1 < x_2 < \cdots < x_{m-2},$ and $1\leq x_s\leq (t-s)+(m-2),$ for all $s\in [m-2].$ The total number of south and east moves in the path $L$ is denoted by $e(L)$.
			Define the image of the path L under the mapping $f$ as $f(L) = Y = \{ y_1, y_2, \ldots, y_{m-1} \}$, where
			\begin{itemize}
				\item $y_d = x_d + (i-1)$ for all $d \in [m-2]$, and 
				\item $y_{m-1} =e(L)+(i-1)$.
			\end{itemize}
			Notice that $Y \subseteq Z$, so $f(L) \in B$.
			
			First, we show that $f \colon A \to B$ is an injection. Let $L_1 \neq L_2$ be two lattice paths from $u_i$ to $v_i$. If $\ell(L_1) \neq \ell(L_2)$, then clearly the $(m-1)$-th element of $f(L_1)$ is distinct from that of $f(L_2)$. Let $\ell(L_1) = \ell(L_2)$. Then from Figure~\ref{fig: general grid graph}, it can be noticed that either at least one $E$ move or at least one $S$ move of $L_1$ is distinct from that of $L_2$. This proves that $f(L_1) \neq f(L_2)$.
			
			Now we prove that $f$ is a surjective function. Let $Y = \{y_1, y_2, \ldots, y_{m-1}\} \subseteq Z$ with $y_1 < y_2 < \cdots < y_{m-1}$. We construct a path $L$ from $u_i$ to $v_i$ having the following properties:
			\begin{itemize}
				\item the first $(m-2)$ south moves occur at positions $y_1 - (i-1), y_2 - (i-1), \ldots, y_{m-2} - (i-1),$ and
				\item $e(L)=y_{m-1}-(i-1).$
				
			\end{itemize}
			Note that $|Z|=m+n-i+1.$ Therefore, $|A|=\binom{m+n-i+1}{m-1}.$ 

			\noindent
			\textbf{Case 2:} Let $i < j$. From Figure~\ref{fig: general grid graph}, it is clear that the number of lattice paths (using south and east moves only) from $u_i$ to $v_j$ equals 
			\[
			|A \setminus \bigcup_{r=i}^{j-1} A_r| = |A| - \Big|\bigcup_{r=i}^{j-1} A_r\Big|.
			\]
			Proceeding as in Case~1, we can establish a bijection between the sets $\bigcup_{r=i}^{j-1} A_r$ and $C$, where $C$ is the set of all $(m-1)$-element subsets of the set 
			\[
			T = \{i, i+1, \ldots, m+j-3\},
			\]
			with $i < i+1 < \cdots < m+j-3$. Since $|T| = m-2+j-i$, we have
			\[
			\Big|\bigcup_{r=i}^{j-1} A_r\Big| = \binom{m-2+j-i}{m-1}.
			\]
			This completes the proof.
		\end{proof}

		\begin{example}
			\begin{figure}[H]
				\centering
				\begin{tikzpicture}[scale=1.5,
					>=stealth,
					vertex/.style={circle, fill=black!70, inner sep=2.5pt, draw=black!80, thick}
					]
					
					\def\rows{5}      
					\def\cols{7}      
					
					\foreach \x in {0,...,\the\numexpr\cols-1} {
						\foreach \y in {0,...,\the\numexpr\rows-1} {
							\node[vertex] (v\x\y) at (\x,-\y) {};
						}
					}
					
					\foreach \y in {0,...,\the\numexpr\rows-2} {
						\foreach \x in {0,...,\the\numexpr\cols-2} {
							\draw[->, thick, blue!70] (v\x\y) -- (v\the\numexpr\x+1\relax\y);
						}
					}
					
					\foreach \y in {\the\numexpr\rows-1} {
						\foreach \x in {0,...,\the\numexpr\cols-2} {
							\draw[<-, thick, red!70] (v\x\y) -- (v\the\numexpr\x+1\relax\y);
						}
					}
					
					\foreach \x in {0,...,\the\numexpr\cols-1} {
						\foreach \y in {0,...,\the\numexpr\rows-2} {
							\draw[->, thick, green!70] (v\x\y) -- (v\x\the\numexpr\y+1\relax);
						}
					}
					
					\foreach \x in {0,...,\the\numexpr\cols-1} {
						\pgfmathtruncatemacro{\colnum}{\x+1}
						\node at (\x, 0.3) {$\mathbf{u_{\colnum}}$};
					}
					
					\foreach \x in {-.3,...,\the\numexpr\cols-1} {
						\pgfmathtruncatemacro{\colnum}{\x+2}
						\node at (\x,-3.2) {$\mathbf{w_{\colnum}}$};
					}
					
					\foreach \x in {-.2,...,\the\numexpr\cols-1} {
						\pgfmathtruncatemacro{\colnum}{\x+2}
						\node at (\x,-3.5) {$\mathbf{e_{\colnum}}$};
					}
					
					\foreach \x in {0,...,\the\numexpr\cols-1} {
						\pgfmathtruncatemacro{\colnum}{\x+1}
						\node at (\x,-4.3) {$\mathbf{v_{\colnum}}$};
					}
					\draw[->, line width=2.5pt, blue!70] (0, 0) -- (1, 0);
					\draw[->, line width=2.5pt, green!70] (1, 0) -- (1, -1);
					\draw[->, line width=2.5pt, green!70] (1, -1) -- (1, -2);
					\draw[->, line width=2.5pt, blue!70] (1, -2) -- (2, -2);
					\draw[->, line width=2.5pt, green!70] (2, -2) -- (2, -3);
					\draw[->, line width=2.5pt, blue!70] (2, -3) -- (3, -3);
					\draw[->, line width=2.5pt, green!70] (3, -3) -- (3, -4);
					\draw[->, line width=2.5pt, red!70] (3, -4) -- (2, -4);
					\draw[->, line width=2.5pt, red!70] (2, -4) -- (1, -4);
					\draw[->, line width=2.5pt, red!70] (1, -4) -- (0, -4);
					\node at (1.2,-.5) {\textcolor{pink}{$2$}};
					\node at (1.2,-1.5) {\textcolor{pink}{$3$}};
					\node at (2.2,-2.5) {\textcolor{pink}{$5$}};
					\draw[->, line width=2.5pt, green!70] (4, 0) -- (4, -1);
					\draw[->, line width=2.5pt, green!70] (4, -1) -- (4, -2);
					\draw[->, line width=2.5pt, blue!70] (4, -2) -- (5, -2);
					\draw[->, line width=2.5pt, blue!70] (5, -2) -- (6, -2);
					\draw[->, line width=2.5pt, green!70] (6, -2) -- (6, -3);
					\draw[->, line width=2.5pt, green!70] (6, -3) -- (6, -4);
					\draw[->, line width=2.5pt, red!70] (6, -4) -- (5, -4);
					\draw[->, line width=2.5pt, red!70] (5, -4) -- (4, -4);
					\node at (4.2,-.5) {\textcolor{pink}{$1$}};
					\node at (4.2,-1.5) {\textcolor{pink}{$2$}};
					\node at (6.2,-2.5) {\textcolor{pink}{$5$}};
					

				\end{tikzpicture}
				\caption{Each edge is assigned a weight of \( 1 \). The numbers highlighted in deep pink indicate the positions of the southward moves in the lattice paths.}
				\label{fig:examp}	
			\end{figure}
			
			Let \( L_1 \) and \( L_2 \) be two lattice paths from \( u_1 \) to \( v_1 \), and \( u_5 \) to \( v_5 \), respectively, described by the bold edges in Figure~\ref{fig:examp}. Clearly, \( L_1 \) reaches \( v_1 \) via the edge \( e_4 \), and all the south moves of \( L_1 \) occur at \( x_1 = 2, x_2 = 3, x_3 = 5 \) to reach \( w_4 \). 
			
			So, we compute:
			\[
			y_1 = x_1 + (1 - 1) = 2, \quad 
			y_2 = x_2 + (1 - 1) = 3, \quad 
			y_3 = x_3 + (1 - 1) = 5.
			\]
			
			Again, since \( e(L_1) = 7 \), we have:
			\[
			y_4 = e(L_1) + (1 - 1) = 7.
			\]
			
			Therefore, by the bijection described in the proof of Theorem~\ref{Thm:main-thm}, we get:
			\[
			f(L_1) = Y_1 = \{2, 3, 5, 7\} \subseteq \{1, 2, \ldots, 10\} = Z.
			\]
			
			Conversely, if we take the subset \( Y_1 = \{2, 3, 5, 7\} \subseteq Z \), then all the south moves of the corresponding lattice path \( L \) occur at positions \( 2, 3, 5 \) to reach some vertex \( w_t \). Since \( e(L) = 7 \), the path must pass through the edge \( e_4 \), which implies \( t = 4 \). Consequently, \( L = L_1 \).
			
			\bigskip
			
			Now, let \( i = 5 \). Then:
			\[
			Z = \{5, 6, 7, 8, 9, 10\}.
			\]
			
			Consider the subset \( Y_2 = \{5, 6, 9, 10\} \subseteq Z \). For this subset, we construct the unique path, say \( L_2 \), where the first \( (m - 2) = 3 \) south moves occur at the positions:
			\[
			x_1 = 5 - (5 - 1) = 1, \quad 
			x_2 = 6 - (5 - 1) = 2, \quad 
			x_3 = 9 - (5 - 1) = 5,
			\]
			and
			\[
			e(L_2) = 10 - (5 - 1) = 6.
			\]
			
			That is, \( L_2 \) should pass through the edge \( e_7 \), which matches the path described in Figure~\ref{fig:examp}.

		\end{example}
		\begin{lemma}\label{Lemma:VD-path-Characterization}
			Let \( \Gamma \) be the \( m \times n \) grid graph as depicted in Figure~\ref{fig: general grid graph}. Then a path system \( (\mathcal{P}, \sigma) \) is vertex-disjoint if and only if each of the \( n \) paths consists solely of southward moves.
		\end{lemma}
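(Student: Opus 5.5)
The plan is to prove both directions directly from the edge structure of $\Gamma$ in Figure~\ref{fig: general grid graph}. We index vertices by (row, column), with rows $1,\dots,m$ and columns $1,\dots,n$, so that $u_k$ is in row $1$, $w_k$ in row $m-1$ and $v_k$ in row $m$ of column $k$. The edges are of three kinds: east edges in rows $1,\dots,m-1$, south edges in every column, and west edges in row $m$.

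For the ``if'' direction, let $P_k$ be the purely southward path $u_k \to \cdots \to w_k \xrightarrow{e_k} v_k$. These $n$ paths lie in pairwise distinct columns, so they are vertex-disjoint. Since $m\ge 2$, each $u_k$ differs from each $v_j$, so nothing degenerate occurs. This gives a vertex-disjoint path system with $\sigma=\mathrm{id}$.

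For the ``only if'' direction, I would argue by induction on $n$, peeling off the rightmost column. Take a vertex-disjoint system $(\mathcal P,\sigma)$ and consider $P_n$, which starts at $u_n$. Column $n$ has no outgoing east edges, so $P_n$ must move straight south through $u_n,\dots,w_n$, then use $e_n$ to reach $v_n$, and afterwards can only continue west along row $m$. I claim $\sigma(n)=n$. Let $P_k$ be the path with $\sigma(k)=n$, i.e.\ the path ending at $v_n$. If $k\neq n$, then $P_k$ and $P_n$ share the vertex $v_n$, contradicting disjointness. Hence $k=n$, $\sigma(n)=n$, and $P_n$ is exactly the full column $n$. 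Disjointness then forbids every other path from entering any vertex of column $n$.

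It follows that the remaining paths $P_1,\dots,P_{n-1}$ use only vertices of columns $1,\dots,n-1$. The subgraph induced on these columns is the $m\times(n-1)$ grid graph of the same shape, with the same east, south and west edge pattern. Moreover $\{P_1,\dots,P_{n-1}\}$ is a vertex-disjoint path system from $\{u_1,\dots,u_{n-1}\}$ to $\{v_1,\dots,v_{n-1}\}$ in that subgraph. By induction each of these paths is purely southward. The base case $n=1$ is immediate, because a single column has only south edges.

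The main obstacle is the west edges of row $m$, which let a path pass through several targets $v_j$. The key point handling this is that every $v_j$ is the endpoint of some path, so any path passing through $v_j$ without ending there collides with the path that does end there. Note also that the hypothesis $m\le n$ is not needed for this lemma, while $m\ge 2$ guarantees that $U\cap V=\emptyset$.
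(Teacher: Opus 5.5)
Your proof is correct, and it is organized differently from the paper's.

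The paper argues by contradiction from an eastward move. If $P_i$ steps east into column $i+1$, then $P_{i+1}$ must dodge that vertex and so also step east. Propagating this to the right ends in a collision with $P_n$, which is forced straight down column $n$. Westward moves are then handled by asserting that they can occur only if some path moves east.

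You instead induct on $n$ by peeling off the rightmost column. $P_n$ is forced down column $n$. The fact that every $v_j$ is the endpoint of some path gives $\sigma(n)=n$. Column $n$ is then blocked, leaving an $m\times(n-1)$ grid of the same shape.

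Both arguments rest on the observation that column $n$ has no outgoing east edges, but your route gains several things:
\begin{itemize}
\item You never need to track the rows at which successive paths turn east. The paper's claim that $P_{n-1}$ passes through the same vertex $(n,t)$ is loose, since the row index actually decreases along the chain.
\item Westward moves are disposed of by a one-line endpoint-collision argument rather than an appeal to the structure of the grid.
\item $\sigma=\mathrm{id}$ falls out directly, which is exactly the uniqueness used afterwards with the LGV lemma.
\end{itemize}
The paper's version is more local: it locates the collision directly from a given eastward step, without inducting on the size of the grid.

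Your remark that $m\le n$ plays no role is correct. It is in fact necessary for your induction, which passes through grids with fewer columns than rows.
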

		
		\begin{proof}
			First, observe that the path system \( (\mathcal{P}, \sigma) \), where \( \sigma \) is the identity permutation and each of the \( n \) paths consists solely of southward moves, is vertex-disjoint.
			
			\medskip
			
			Conversely, let \( (\mathcal{P}, \sigma) \) be a path system from \( U = \{u_1, \ldots, u_n\} \) to \( V = \{v_1, \ldots, v_n\} \). We show that if at least one of the \( n \) paths contains an eastward move, then the path system cannot be vertex-disjoint.
			
			Let \( P_i \in \mathcal{P} \) be a path from \( u_i \) to \( v_j \) that includes an eastward move from \( (i, t) \) to \( (i+1, t) \), for some \( i \in [n-1] \) and \( t \in [m] \). Then the subsequent path \( P_{i+1} \), starting at \( u_{i+1} \), must avoid the vertex \( (i+1, t) \); otherwise, the paths \( P_i \) and \( P_{i+1} \) would share a vertex, violating vertex-disjointness.
			
			By continuing this argument inductively, we conclude that \( P_{n-1} \) must pass through \( (n, t) \). However, since \( \Gamma \) is the \( m \times n \) grid graph as described in Figure~\ref{fig: general grid graph}, the final path \( P_n \) must also pass through \( (n, t) \). Thus, the path system \( (\mathcal{P}, \sigma) \) cannot be vertex-disjoint.
			
			Now, suppose a path \( P_i \) includes a westward move (represented by a red edge in Figure~\ref{fig: general grid graph}). From the structure of the grid, it is clear that in order for such a move to occur, there must exist another path \( P_j \) that includes at least one eastward move. As shown above, the presence of an eastward move implies that the path system is not vertex-disjoint.
			
			\medskip
			
			This completes the proof.
		\end{proof}
		
		\begin{proof}[Proof of Theorem \ref{Thm:main-thm}]
			We now prove our main theorem using the celebrated Gessel-Lindström-Viennot lemma. Consider the grid graph depicted in Figure~\ref{fig: general grid graph}. Clearly, this is an acyclic, weighted, directed graph, where each edge has weight \( 1 \). 
			
			Therefore, by Theorem~\ref{Thm:lattic-path-counting}, the \( (i,j)^{\text{th}} \) entry of the associated path matrix \( M \) is given by
			\begin{align*}
				m_{i,j} =
				\begin{cases}
					\dbinom{m+n-i-1}{m-1}, & \text{if } i \geq j, \\
					\dbinom{m+n-i-1}{m-1} - \dbinom{m-2 + j - i}{m-1}, & \text{if } i < j.
				\end{cases}
			\end{align*}
			
			Now, by Lemma~\ref{Lemma:VD-path-Characterization}, the grid graph admits a unique vertex-disjoint path system, and the weight of this path system is clearly \( 1 \). Hence, by Lemma~\ref{lgv-lemma}, we have \( \det(M) = 1 \). This completes the proof.
			
		\end{proof}
		
		\section*{Acknowledgements} 
		The author would like to express sincere gratitude to the referee for their careful reading of the manuscript and for their valuable comments and suggestions, which have significantly improved the presentation and quality of this work.
		
		The author gratefully acknowledges the financial support provided through the NBHM Research Project (Reference No.~02011/29/2025NBHM(RP)/R\&DII/11951). The author also thanks the National Board for Higher Mathematics (NBHM), India, for supporting this research.
		
		The author is deeply grateful to Sajal Kumar Mukherjee for discussions related to this work.
		
		Finally, the author acknowledges the excellent research environment and facilities provided by Dhirubhai Ambani University, Gandhinagar, Gujarat, which greatly facilitated the completion of this research.
		
		\subsection*{Declaration of competing interest}
		The author have no relevant financial or non-financial interests to disclose. 
		
		\subsection*{Data availability}
		Data sharing is not applicable to this article as no datasets were generated or analyzed 
		during the present study.

		\bibliographystyle{amsplain}
		\bibliography{gen-inv-lcp20}
	\end{document}